\newtheorem{theorem}{Theorem}[section]
\newtheorem{proposition}[theorem]{Proposition}
\theoremstyle{definition}
\newtheorem{definition}[theorem]{Definition}
\newtheorem{remark}{Remark}
\DeclareMathOperator{\dist}{dist}
\DeclareMathOperator\supp{supp}
\def\R{\mathbb{R}}
\let\vp=\varphi
\let\t=\tilde
\let\ol=\overline
\let\ul=\underline
\let\mc=\mathcal
\def\1{\mathbbm{1}}
\def\thm#1{Theorem~\ref{thm:#1}}
\def\limt{\lim_{t\to+\infty}}
\newenvironment{formula}[1]{\begin{equation}\label{#1}}
                       {\end{equation}\noindent}
\def\Fi#1{\begin{formula}{#1}}
\def\Ff{\end{formula}\noindent}
\newcommand{\SE}{\setcounter{equation}{0} \section}
\def\re{\mc{R}^e}
\def\ri{\mc{R}^i}
\def\ar{asymptotically spherical}
\title[\tiny Symmetrization and anti-symmetrization in parabolic equations]{\bf Symmetrization and anti-symmetrization \\ in parabolic equations}
\author{Luca  Rossi}%\,$^{\hbox{a}}$\\
\address{CNRS, Ecole des Hautes Etudes en Sciences Sociales, PSL Research University,  
Centre d'Analyse et Math\'ematiques Sociales, 190-198 avenue de France 
F-75244 Paris Cedex 13, France}
\email{luca.rossi@ehess.fr}
\thanks{}
\begin{document}

\begin{abstract}
	We derive some symmetrization and anti-symmetrization properties of parabolic equations.
	First, we deduce from a result by Jones \cite{Jones}  a quantitative estimate of how far  the
	level sets of solutions are from being spherical. Next, using this property, we derive a
	criterion providing solutions whose level sets do not converge to spheres for a class of equations including linear
	equations and Fisher-KPP reaction-diffusion equations.
\end{abstract}

\maketitle

\vspace{-17pt}

\smallskip
\SE{Introduction}

We are concerned with the spherical symmetrization feature of the
equation
\Fi{evol}
\partial_t u=\Delta u+f(u),\quad t>0,\ x\in\R^N.
\Ff
This is a semilinear reaction-diffusion equation, but we do not exclude the case where~$f$ is linear.
%our results are new even in such case.
A result by Jones \cite{Jones} asserts that solutions emerging from compactly supported initial data 
look more and more spherical as t increases, in the following sense: the normal to 
the level sets at nonsingular points always intersect the convex hull of the support of the datum and therefore, if the level sets go to infinity, 
their normal approaches the radial direction.
This result is derived for reaction-diffusion equations of bistable type, but the very elegant 
proof, based on a reflection argument, actually applies to much more general equations, also 
time-dependent. Using some geometrical arguments, we will show that Jones' result implies more than 
the convergence of the normal to the radial direction: it provides an explicit estimate of the distance between the upper level sets and suitable balls. Namely, if $\supp u_0\subset B_\delta$ then the upper 
level set 
\Fi{uls}
\mc{U}_\theta(t):=\{x\in\R^N \,:\, u(t,x)>\theta\}
\Ff
satisfies 
\Fi{Hd}
B_{r(t)}\subset\mc{U}_\theta(t)\subset B_{r(t)+\delta\pi},
\Ff
for some function $r$ and for $t$ sufficiently large. The precise statement is given in \thm{sym} below.
This means that the Hausdorff distance between the upper level set~$\mc{U}_\theta(t)$ and a 
suitable ball is bounded by the constant $\delta\pi/2$ for large $t$. In the case where $f$ is of KPP-type
and time-independent, this property has already been
obtained by Ducrot \cite{Ducrot}, with in addition the explicit 
expression for $r(t)$, but with a generic constant instead of the precise 
value $\delta\pi/2$.

Then, the question that naturally arises is whether property \eqref{Hd} is sharp
or not, that is, does the difference between the radii of the internal and 
external balls tends to $0$ as $t\to+\infty$\,? Of course, for this question to 
make sense we have to consider all possible balls, not just the ones centred at 
the 
origin. This leads us to define
\begin{equation}\label{Ri}
\ri_\theta(t):=\sup\{r>0\,:\, \exists x_0\in\R^N,\ u(t,x)>\theta\text{ for 
all } x\in B_r(x_0)\},
\end{equation}
\begin{equation}\label{Re}
\re_\theta(t):=\inf\{r>0\,:\, \exists x_0\in\R^N,\ u(t,x)\leq\theta\text{ for all
} x\in (B_r(x_0))^c\},
\end{equation}
which are the radii respectively of the largest ball contained in 
$\mc{U}_\theta(t)$ and of the smallest ball containing 
$\mc{U}_\theta(t)$. Another notion of symmetrization is the 
convergence to a radial function. We consider both.

\begin{definition}\label{def:sym}
 We say that a function $u:[0,+\infty)\times\R^N\to\R$ is {\em asymptotically
spherical} if it fulfils one of the following properties:
\begin{enumerate}[(i)]
\item 
$$\forall\theta\in(0,1),\quad 
\limt\big(\re_\theta(t)-\ri_\theta(t)\big)=0;$$
 \item there exist two functions $\phi:[0,+\infty)^2\to\R$,
$\Gamma:[0,+\infty)\to\R^N$ such that
$$\lim_{t\to+\infty}\big(u(t,x)-\phi(t,|x-\Gamma(t)|)\big)=0.$$
\end{enumerate}
\end{definition}
The symmetrization properties (i) and (ii) are not related in general, unless 
the function $r\mapsto\phi(t,r)$ has a strictly monotonic character. 

Two counter-examples to the spherical symmetrization are known in the 
literature for reaction-diffusion equations of bistable type: Yagisita~\cite{vsJones0} and Roussier~\cite{vsJones2}, the latter in dimension~2. The common idea there is to 
construct a solution which looks like a planar front when followed along a 
given  direction, shifted by different values depending on the direction. This is possible due to the strong stability of the (unique up to shift) front. Hence, 
those examples have a rather specific form. 
The question remains open in many relevant cases, such as, strikingly, the linear one.

In the present paper we focus on concave (in a weak sense) terms $f$, including the case of linear equations and Fisher-KPP equations, possibly time-dependent. One cannot proceed as in the 
bistable case because of the lack of strong stability of fronts. Using a 
different method, we build a large class of non-\ar\ 
solutions. We find in particular that the set of initial data for 
which the solution is not asymptotically 
spherical is dense in the space of compactly supported continuous functions.
Curiously, to achieve this we use our previous symmetrization result.
We also need to derive some estimates of the distance between level sets of
solutions - c.f.~Theorems \ref{steep}, \ref{steep2} below - that we believe
are of independent interest. 

%-------------------------------------------------------------------------------

\subsection{Hypotheses and main results}

Consider the problem
\Fi{evol-t}
  \partial_t u=\Delta u+f(t,u),\quad t>0,\ x\in\R^N.\\
\Ff
We will always assume in the sequel that 
$f(t,z)$ is H\"older continuous in $t$ and uniformly
Lipschitz continuous in $z$, uniformly with respect to $t$, and satisfies
$$\forall t>0,\quad f(t,0)=0.$$
The initial datum will always be nonnegative and continuous and solutions 
will be classical and locally bounded in time. 
We are concerned with {\em invading} solutions, that is, 
solutions satisfying
\Fi{invasion}
\exists Z\in(0,+\infty],\quad
\forall K\Subset\R^N,\quad \liminf_{t\to+\infty}\Big(\min_{x\in K}u(t,x)\Big)\geq Z.
\Ff
If $Z$ is finite, it can be viewed as the saturation level for the problem. Typically, $Z=1$ in reaction-diffusion equations; if $f$ satisfies the KPP hypothesis~\eqref{KPP} below, it is known that any nontrivial solution is invading, 
but this may not be the case for other classes of reaction terms, c.f.~\cite{AW}. 
In the case where $f$ is linear and increasing in $z$, we have that all nontrivial solutions fulfil the
invasion condition with $Z=+\infty$.  

Our symmetrization result holds as soon as Jones' technique applies and thus it does not require any specific assumption on $f$. 

\begin{theorem}\label{thm:sym}
Let $u$ be a solution to \eqref{evol-t} with initial 
datum supported in a ball~$B_\delta$ satisfying the invasion property \eqref{invasion}. Then,
for $\theta\in(0,Z)$ and $t$ large enough, the upper level set $\mc{U}_\theta(t)$ defined by \eqref{uls} is star-shaped with respect to the 
origin and satisfies
\Fi{spherical}
B_{r_\theta(t)}\subset \mc{U}_\theta(t) \subset 
B_{r_\theta(t)+\delta\pi},
\Ff
for some positive function $r_\theta$. In particular, 
$$
0\leq\re_\theta(t)-\ri_\theta(t)\leq\delta\pi.
$$
\end{theorem}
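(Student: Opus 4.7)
The plan is to combine Jones' normal-line property -- which, as observed in the introduction, applies to \eqref{evol-t} without any restriction on $f$ -- with an elementary geometric argument. Two preliminary facts stem from \eqref{invasion} together with standard parabolic estimates for compactly supported data: for each $\theta\in(0,Z)$ and each $R>0$, one has $B_R\subset\mc{U}_\theta(t)$ for $t$ large, and $\mc{U}_\theta(t)$ stays bounded. Jones' theorem then guarantees that at every regular boundary point $x$ of $\mc{U}_\theta(t)$ (i.e.\ $u(t,x)=\theta$ and $\nabla u(t,x)\neq 0$), the line $\{x+s\nabla u(t,x):s\in\R\}$ meets $\ol{B_\delta}$.

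I would first establish star-shapedness. The distance from the origin to the normal line at $x$ equals the length of the component of $x$ perpendicular to $\nabla u(t,x)$, so the Jones bound rewrites as $\sin\alpha(x)\leq\delta/|x|$, with $\alpha(x)$ the angle between $\nabla u(t,x)$ and the line $\R x$. Picking $R>\delta$, the first preliminary fact gives $|x|>R$, and hence $\alpha(x)<\pi/2$, at every regular boundary point for $t$ large. Since $0\in\mc{U}_\theta(t)$, the sign of $\nabla u(t,x)\cdot x$ is then forced to be negative, so $u(t,\cdot)$ is strictly decreasing along every ray from the origin at each level-set crossing. Consequently $\mc{U}_\theta(t)$ meets every such ray in a single radial segment: it is star-shaped with respect to $0$.

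Writing $\partial\mc{U}_\theta(t)=\{\rho(t,\omega)\omega:\omega\in\Sph\}$ with $\rho(t,\cdot)>0$ continuous, it remains to control $\max_\omega\rho-\min_\omega\rho$. I reduce to dimension two by intersecting with an arbitrary $2$-plane $P$ through the origin: the trace $\partial\mc{U}_\theta(t)\cap P$ is a simple closed star-shaped curve in $P$, and the orthogonal projection onto $P$ of the $\R^N$-normal at a trace point coincides with the $P$-normal and does not increase the distance to the origin (because $0\in P$), so Jones' property descends to the section. A direct computation in polar coordinates $(\rho,\vartheta)$ identifies the distance from the origin to the normal line at the point $\rho(\vartheta)(\cos\vartheta,\sin\vartheta)$ with $|\rho\rho'|/\sqrt{\rho^2+\rho'^2}$, so Jones becomes $|\rho'|\leq\delta\rho/\sqrt{\rho^2-\delta^2}$.

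Finally, on any monotone arc of $\vartheta\mapsto\rho(\vartheta)$ joining the minimum $m$ to the maximum $M$, star-shapedness bounds the angular span by $\pi$; inverting the derivative and integrating yields
$$
\pi\;\geq\;\int_m^M\frac{d\vartheta}{d\rho}\,d\rho\;\geq\;\frac{1}{\delta}\int_m^M\sqrt{1-\delta^2/\rho^2}\,d\rho\;\geq\;\sqrt{1-\delta^2/m^2}\;\frac{M-m}{\delta}.
$$
As $m(t)\to+\infty$ by invasion, this forces $M-m\leq\pi\delta$ for $t$ large enough (up to the asymptotically vanishing correction $1/\sqrt{1-\delta^2/m^2}$). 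The estimate being uniform in the plane $P$, we conclude that $\max_\omega\rho-\min_\omega\rho\leq\pi\delta$ on $\Sph$, and setting $r_\theta(t):=\min_\omega\rho(t,\omega)$ delivers \eqref{spherical}. The main obstacle I foresee is this quantitative step: extracting the sharp constant $\pi$ relies on both the $2$D reduction and the star-shapedness, which pin the angular span of a monotone arc at $\pi$.
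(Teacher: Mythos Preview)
Your overall strategy matches the paper's: Jones' normal-line property, reduction to a $2$-plane through the origin, a bound on the angular derivative of the radial graph, and integration over an arc of angular length at most $\pi$. Three points need correction.

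\textbf{Star-shapedness.} The angle bound $\alpha(x)<\pi/2$ only gives $\nabla u(t,x)\cdot x\neq0$ at regular boundary points; that the sign is negative ``since $0\in\mc{U}_\theta(t)$'' is not a local fact and can fail in principle (at the inner boundary of an annular component one would have $\nabla u\cdot x>0$ while Jones' property and your angle bound still hold). You also use regularity of every boundary point without justification. The paper bypasses both issues by \emph{reproving} the first step of Jones' reflection argument rather than quoting its conclusion: for any $x_0\notin B_\delta$, reflecting $u$ across the hyperplane $\{x\cdot x_0=|x_0|^2\}$ and applying the strong comparison principle plus Hopf's lemma yields $x_0\cdot\nabla u(t,x_0)<0$ for all $t>0$. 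This gives strict radial monotonicity outside $B_\delta$, hence both star-shapedness and regularity of $\partial\mc{U}_\theta(t)$ once invasion guarantees $B_\delta\subset\mc{U}_\theta(t)$.

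\textbf{Integration step.} The existence of a \emph{monotone} arc from $m$ to $M$ of span at most $\pi$ is not a consequence of star-shapedness, and it is not needed. The paper simply picks any two boundary points $P,Q$ in the plane with $|\alpha_P-\alpha_Q|\leq\pi$ and bounds $\big||P|-|Q|\big|\leq\int|\vp'|\,d\alpha\leq\pi\delta$ once $|\vp'|\leq\delta$; no inversion of $\rho(\vartheta)$ is required.

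\textbf{Exact constant.} To obtain $|\vp'|\leq\delta$ (rather than $\delta/\sqrt{1-\delta^2/m^2}$) for $t$ large, the paper exploits that $\supp u_0$ is compact in the open ball $B_\delta$: Jones' bound then holds with $\t\delta:=\max\{|x|:x\in\supp u_0\}<\delta$, and $\t\delta/\sqrt{1-\t\delta^2/\vp^2}\leq\delta$ once $\min\vp$ is large enough, which is ensured by invasion.
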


Next, we build non-asymptotically spherical solutions under the following positivity and weak concavity assumption
on $f$:
\Fi{KPP}
\begin{cases}
	\displaystyle\exists Z>0,\quad \forall z\in(0,Z),\quad
	\inf_{t>0}f(t,z)>0,\\ 
	\displaystyle z\mapsto\frac{f(t,z)}{z}\ \hbox{ is 
		nonincreasing in $(0,+\infty)$, for all $t>0$}.
\end{cases}
\Ff
This hypothesis holds in the linear case $f(t,z)=\zeta(t)z$ with $\inf \zeta>0$, or when $f$ is a  
reaction term of KPP-type, such as $z(1-z)$.

% We further require that
% \Fi{finc}
% \exists T>0,\quad \forall t>0,\ z\in(0,1),
% \quad f(t+T,z)\geq f(t+T,z).
% \Ff
\begin{theorem}\label{gen-anti}
Assume that $f$ satisfies \eqref{KPP}. Let $u_1,u_2$ 
be two nonnegative, not identically equal to $0$, continuous functions with
compact support. Then, for $|\xi|$ large enough, the solution to \eqref{evol-t} 
with initial datum 
$$u_0(x)=u_1(x)+u_2(x+\xi)$$
is not asymptotically spherical.
\end{theorem}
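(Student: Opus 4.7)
The plan is to bracket $u$ between two combinations of the individual-bump solutions, apply \thm{sym} componentwise to sandwich $\mc{U}_\theta(t)$ between two unions of balls, and then argue geometrically that neither form of asymptotic sphericity can withstand this asymmetry when $|\xi|$ is large. Let $u^1,u^2$ solve \eqref{evol-t} with initial data $u_1,u_2$, supported in balls $B_{\delta_1},B_{\delta_2}$ respectively. Since $f(t,0)=0$ and $u_0=u_1+u_2(\cdot+\xi)$ dominates each summand, parabolic comparison yields $u(t,x)\ge u^1(t,x)$ and $u(t,x)\ge u^2(t,x+\xi)$. For the upper bound I exploit \eqref{KPP}: the monotonicity of $z\mapsto f(t,z)/z$ gives the sub-additivity $f(t,a+b)\le f(t,a)+f(t,b)$ for $a,b\ge 0$ (split $f(t,a+b)=a\tfrac{f(t,a+b)}{a+b}+b\tfrac{f(t,a+b)}{a+b}$ and apply the monotonicity to each factor), so $w(t,x):=u^1(t,x)+u^2(t,x+\xi)$ is a supersolution of \eqref{evol-t} with initial datum $u_0$, whence $u\le w$.

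Applying \thm{sym} to each $u^i$ at levels $\theta$ and $\theta/2$ gives $B_{r^i_{\theta'}(t)}(x_i)\subset\{u^i>\theta'\}\subset B_{r^i_{\theta'}(t)+\delta_i\pi}(x_i)$ with $x_1=0,\ x_2=-\xi$ (for $\theta'\in\{\theta,\theta/2\}$). Combined with the bracket on $u$ and the pigeonhole $\{a+b>\theta\}\subset\{a>\theta/2\}\cup\{b>\theta/2\}$, this yields the sandwich
\[
B_{r^1_\theta(t)}(0)\cup B_{r^2_\theta(t)}(-\xi)\;\subset\;\mc{U}_\theta(t)\;\subset\; B_{R^1_\theta(t)}(0)\cup B_{R^2_\theta(t)}(-\xi),
\]
with $R^i_\theta(t):=r^i_{\theta/2}(t)+\delta_i\pi$. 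The steepness estimates \ref{steep}--\ref{steep2} announced above provide a $t$-uniform constant $C$ with $R^i_\theta(t)-r^i_\theta(t)\le C$ and $|r^1_\theta(t)-r^2_\theta(t)|\le C$.

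To rule out Definition~\ref{def:sym}(i), observe that in dimension $N\ge 2$ the inscribed radius of any union $B_{\rho_1}(y_1)\cup B_{\rho_2}(y_2)$ equals $\max(\rho_1,\rho_2)$: by rotational symmetry about the axis through $y_1,y_2$ a candidate inscribed ball may be centred on that axis, and any chord through its centre perpendicular to the axis must lie in the larger of the two cross-section discs, capping its length at $2\max(\rho_i)$. Hence $\mc{R}^i_\theta(t)\le\max(R^i_\theta(t))\le\max(r^i_\theta(t))+C$, whereas the diameter realised along the $\xi$-axis inside the lower sandwich set forces $\mc{R}^e_\theta(t)\ge(r^1_\theta(t)+r^2_\theta(t)+|\xi|)/2$. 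Subtracting,
\[
\mc{R}^e_\theta(t)-\mc{R}^i_\theta(t)\;\ge\;\frac{|\xi|}{2}-\frac{|r^1_\theta(t)-r^2_\theta(t)|}{2}-C\;\ge\;\frac{|\xi|}{2}-\frac{3C}{2},
\]
which stays bounded away from $0$ once $|\xi|>3C$, contradicting (i).

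For Definition~\ref{def:sym}(ii), if $u(t,x)-\phi(t,|x-\Gamma(t)|)\to 0$, then $\mc{U}_\theta(t)$ must be, up to vanishing Hausdorff error, invariant under rotations about $\Gamma(t)$. The orbit of $B_{r^1_\theta(t)}(0)$ under these rotations is a spherical shell (a ball when $|\Gamma(t)|\le r^1_\theta(t)$) of outer radius $|\Gamma(t)|+r^1_\theta(t)$ centred at $\Gamma(t)$, and must fit inside $B_{R^1_\theta(t)}(0)\cup B_{R^2_\theta(t)}(-\xi)$. Combining the inscribed-radius bound of the previous paragraph with a direction-by-direction check on the outer sphere of the orbit forces $|\Gamma(t)|\le C'$ for a $t$-uniform constant $C'$, and the symmetric analysis applied to the orbit of $B_{r^2_\theta(t)}(-\xi)$ yields $|\Gamma(t)+\xi|\le C'$. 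Thus $|\xi|\le 2C'$, again a contradiction for $|\xi|$ large. The hard part is therefore carried entirely by the steepness estimates \ref{steep}--\ref{steep2}: without a $t$-uniform bound on $R^i_\theta-r^i_\theta$ the sandwich gap would absorb the $|\xi|/2$ separation driving both contradictions, and the remaining geometry is elementary.
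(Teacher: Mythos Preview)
Your overall strategy---bracket $u$ between $\max\{u^1,\,u^2(\cdot+\xi)\}$ and $u^1+u^2(\cdot+\xi)$ via KPP sub-additivity, then compare inscribed and circumscribed radii---is exactly the one used in the paper. However, there is a genuine gap in your invocation of Theorems~\ref{steep} and~\ref{steep2}: those results yield only
\[
\liminf_{t\to+\infty}\big(r_{\theta'}(t)-r_\theta(t)\big)<+\infty,\qquad
\liminf_{t\to+\infty}\big|r^1_\theta(t)-r^2_{\theta'}(t)\big|<+\infty,
\]
not the $t$-uniform constant $C$ you assert (the paper explicitly remarks after Theorem~\ref{steep} that replacing $\liminf$ by $\sup$ is an open question). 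Worse, your final inequality requires \emph{several} such differences---$r^i_{\theta/2}(t)-r^i_\theta(t)$ for $i=1,2$ together with $|r^1_\theta(t)-r^2_\theta(t)|$---to be bounded along a \emph{common} sequence of times; separate $\liminf$ statements do not provide this.

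The paper avoids the difficulty by passing to a single pair of auxiliary solutions: $\ul w$ emerging from a common minorant $\ul w_0\le u_1(\cdot+x_1),\,u_2(\cdot+x_2)$, and $\ol w$ emerging from $\max\{u_1,u_2\}$. One then gets $\ri_\theta(t)\le\ol r_{\theta/2}(t)+\ol\delta\pi$ and $\re_\theta(t)\ge\ul r_\theta(t)+\tfrac12(|\xi|-|x_1|-|x_2|)$, so only the \emph{single} quantity $\ol r_{\theta/2}(t)-\ul r_\theta(t)$ must have finite $\liminf$, and one application of Theorem~\ref{steep2} suffices. Your treatment of Definition~\ref{def:sym}(ii) is also too informal: condition~(ii) is about pointwise convergence of $u$ to a radial profile, not Hausdorff convergence of level sets, and ``up to vanishing Hausdorff error'' does not follow. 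The paper instead shows, using the star-shapedness from \thm{sym}, that~(ii) forces $\ri_{\theta'}(t)>\re_\theta(t)$ for any $\theta'<\theta$ and large $t$, which the preceding estimate (with $\theta$ replaced by $\theta'$ on the $\ri$ side) contradicts for $|\xi|$ large.
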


This theorem roughly says that if the support of the initial datum 
has two  components which are far apart enough then the 
solution is not \ar. Thus, adding to a compactly supported initial 
datum $u_1$ a compactly supported perturbation $u_2$, as small as wanted but 
sufficiently far, will give rise to a solution which is not asymptotically 
spherical. 
% In particular, the set of (compactly supported) initial data with 
% non-symmetric solution is dense (in any space). 

%-------------------------------------------------------------------------------

\SE{The symmetrization result}

\begin{proof}[Proof of \thm{sym}]
 We know from Jones \cite{Jones} that, for $t$ sufficiently large,
if $u(t, x_0) = \theta$ and $\nabla u(t, x_0)\neq 0$ then the normal line
to the upper level set $\mc{U}_\theta(t)$ through the point $x_0$ intersects
the 
convex hull of the support of the initial datum~$u_0$. This property is derived in \cite{Jones}
using a reflection argument (see also a simplified proof by Berestycki \cite[Theorem 
2.9]{NATO}) inspired by Serrin \cite{Serrin} and Gidas, Ni, Nirenberg \cite{GNN}. 
We repeat the first step of the argument,  in 
order to see that $u$ is radially decreasing outside $B_\delta\supset \supp 
u_0$.
More precisely, we will show that
\Fi{decreasing}
\forall x_0\notin B_\delta,\quad x_0\.\nabla u(t, x_0)<0.
\Ff
Take $x_0\notin 
B_\delta$ and, calling $\mc{T}$ the reflection with 
respect to the hyperplane $\{x\in\R^N\,:\, x\.x_0=|x_0|^2\}$, define
$$v(t,x):=u(t,\mc{T}(x)).$$
The function $v$ satisfies the same equation as $u$, because the Laplace operator is invariant under reflection, together with 
the initial and boundary conditions
$$v(0,x)=0\quad\text{for }x\.x_0<|x_0|^2,\qquad
v(t,x)=u(t,x)\quad\text{for }t>0,\ x\.x_0=|x_0|^2.$$
It follows from the parabolic strong comparison principle that, for $t>0$ and 
$x\.x_0<|x_0|^2$, it holds $v(t,x)<u(t,x)$. Then, by Hopf's lemma,
$$\partial_{x_0} u(t,x_0)<\partial_{x_0} v(t,x_0).$$
From this, because $\partial_{x_0} v(t,x_0)=-\partial_{x_0} u(t,x_0)$, we 
get $\partial_{x_0} u(t,x_0)<0$, that is \eqref{decreasing}.
% , and this holds for any $x_0\notin B_\delta$.
% Since $u(t,x)\to1$ as $t\to\infty$ 
% locally uniformly in $x$ and $u(t,x)\to0$ as $|x|\to\infty$ by standard 
% parabolic decay, we have that $\partial\mc{U}_\theta(t)$ is nonempty and does 
% not intersect $B_\delta$ if $t$ is larger than some $t_0$, 
% whence \eqref{decreasing} holds.

Fix $\theta\in(0,Z)$. The invasion property \eqref{invasion} implies that $\mc{U}_\theta(t)\supset B_\delta$ for $t$ larger than some $t_0$. Therefore, for $t>t_0$ and $e\in S^{N-1}$, the 
function $\rho\mapsto u(t,\rho e)$ is larger than $\theta$ for $\rho\in(0,\delta)$. Then, by 
\eqref{decreasing}, it is strictly decreasing for $\rho\geq\delta$ and moreover it tends to 
$0$ at infinity because $u(t,x)\to0$ as $|x|\to\infty$ by standard 
parabolic decay. Consequently, the set $\mc{U}_\theta(t)$ is star-shaped with respect to the 
origin for $t>t_0$.
% there is a unique value $\phi(\xi)>0$ for which 
% $u(t,\phi(\xi)\xi)=\theta$. The homeomorphism of the statement 
% of the theorem will be:
% $$\forall r\in[0,1),\ \xi\in S^{N-1},\quad
% r\xi\mapsto r\phi(\xi)\xi.$$
% $$B_1\ni x\mapsto \phi(x/|x|)x \qquad=0 \text{ if }x=0).$$

Now, take $t>t_0$ large enough so that Jones' result applies.
Pick two points $P,Q\in\partial\mc{U}_\theta(t)$. By considering the 
plane $H$ through $P$, $Q$ and the origin, we can reduce to the bidimensional case: 
ignoring the other $N-2$ directions, we write 
$$H\cap \partial\mc{U}_\theta(t)=\{\vp(\alpha)(\cos\alpha,\sin\alpha)\,:\, 
\alpha\in[0,2\pi)\},$$
for some positive function $\vp$. The points
$P$, $Q$ are obtained for two angles $\alpha_P$, $\alpha_Q$. It is 
not restrictive to assume that $|\alpha_P-\alpha_Q|\leq\pi$. Since 
$B_\delta\subset\mc{U}_\theta(t)$, we know from~\eqref{decreasing} that $\vp$ is of class $C^1$. For $\alpha\in[0,2\pi)$, we set 
for short 
$\mathbf{x}=\vp(\alpha)(\cos\alpha,\sin\alpha)$, 
$\mathbf{v}=\nabla u(\mathbf{x})$ and we compute
\Fi{ortogonal}
	0=\frac{d}{d\alpha} u(\vp(\alpha)(\cos\alpha,\sin\alpha))=
\mathbf{v}\.(\vp'(\alpha)\cos\alpha-\vp(\alpha)\sin\alpha,
\vp'(\alpha)\sin\alpha+\vp(\alpha)\cos\alpha).
\Ff
Jones' result implies that the distance from the line $s\mapsto 
\mathbf{x}-s\mathbf{v}$ and the origin is less than $\t\delta:=\max\{|x|\,:\, 
x\in\supp u_0\}<\delta$.
Namely,
$$\mathbf{w}:=\mathbf{x}-\frac{\mathbf{x}\.\mathbf{v}}
{|\mathbf{v}|^2}\,\mathbf{v}$$ satisfies
$|\mathbf{w}|\leq\t\delta$.
In order to get an estimate on $\vp'$ we observe that, by \eqref{ortogonal},
\[\begin{split}
0 &=(\mathbf{x}-\mathbf{w})\.(\vp'(\alpha)\cos\alpha-\vp(\alpha)\sin\alpha,
\vp'(\alpha)\sin\alpha+\vp(\alpha)\cos\alpha)\\
&=\vp(\alpha)\vp'(\alpha)-\mathbf{w}
\.(\vp'(\alpha)\cos\alpha-\vp(\alpha)\sin\alpha,
\vp'(\alpha)\sin\alpha+\vp(\alpha)\cos\alpha),
\end{split}
\]
whence, because $|\mathbf{w}|\leq\t\delta$,
$$\big(\vp(\alpha)\vp'(\alpha)\big)^2\leq
\t\delta^2[(\vp'(\alpha))^2+(\vp(\alpha))^2].$$
We eventually find that
$$(\vp'(\alpha))^2\leq \frac{\t\delta^2}{1-\t\delta^2/(\vp(\alpha))^2}.$$
Notice that, by the invasion condition, $\min\vp\to+\infty$ as $t\to+\infty$.
Hence, for $t$ large enough, 
$|\vp'(\alpha)|\leq \delta$ for all $\alpha\in[0,2\pi)$.
Reverting to the points $P$, $Q$, this implies that 
$\big||P|-|Q|\big|\leq\pi\delta$. This concludes the proof of the theorem.
\end{proof}

%-------------------------------------------------------------------------------

\smallskip
\SE{Non-\ar\ solutions}

In order to make the construction of the counter-example of Theorem \ref{gen-anti}
as transparent as possible, we start with the particular instance where $u_1\equiv u_2$.

We will need to control the distance between level sets of solutions. Because of its independent interest, we derive it under weaker assumptions than \eqref{KPP}. Namely,
\Fi{f>g}
\begin{cases}
	\displaystyle
\exists g\in C([0,+\infty)),\quad \forall t,z\geq0,\ f(t,z)\geq g(z),\\
\displaystyle \exists Z>\theta_0>0,\quad g\leq0\ \text{ in }(0,\theta_0), \quad 
g>0\ \text{ in }(\theta_0,Z),\quad \int_0^Z g>0.
\end{cases}
\Ff
%
%\Fi{1stable}
%\exists\theta_0\in(0,1),\quad\forall z\in(\theta_0,1), \quad 
%\inf_{t>0} f(t,z)>0
%\Ff
%together with one of the following:
%\Fi{combustion}
%\forall t>0,\ z\in(0,\theta_0],\quad f(t,z)\geq0,
%\Ff
%\Fi{bistable}
%\begin{cases}
%\displaystyle \forall z\in[0,\theta_0],\quad\inf_{t>0} 
%f(t,z)\leq0\\
%\displaystyle \int_0^1
%\inf_{t>0} f(t,z)dz>0.\\
%\end{cases}
%\Ff
Besides the case \eqref{KPP}, this hypothesis is fulfilled when $f$ is a reaction term of any of the classical types considered in the literature: monostable, combustion, bistable \cite{AW}, but~also in much wider
cases where $f$ has several zeroes. It is only required here to ensure invasion for solutions with large enough, compactly supported initial data. 

\begin{theorem}\label{steep}
Under the assumption \eqref{f>g}, let $u$ 
be a solution of \eqref{evol-t} with compactly supported initial datum for 
which the invasion property \eqref{invasion} holds with the same $Z$ as in \eqref{f>g}. Then, for $\theta_0<\theta'<\theta<Z$, the functions $r_{\theta}$, $r_{\theta'}$ provided by \thm{sym} satisfy
$$\liminf_{t\to+\infty}\big(r_{\theta'}(t)-r_{\theta}(t)\big)<+\infty.$$
\end{theorem}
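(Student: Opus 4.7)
My plan is a proof by contradiction: I will assume $r_{\theta'}(t)-r_\theta(t)\to+\infty$ as $t\to+\infty$ and derive an absurdity. Write $A(t):=r_{\theta'}(t)-r_\theta(t)$. The intuition is that if $A(t)$ is very large, then \thm{sym} forces $u(t,\cdot)\in(\theta',\theta]$ on a thick annular region, and there the reaction is bounded away from zero because $g>0$ on $(\theta_0,Z)$; so $u$ should get pushed above $\theta$ on that annulus in bounded time, forcing $r_\theta$ to grow much faster than the natural linear spreading inherited from the Lipschitz character of $f$ can allow.

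The first step is a hair-trigger/ignition lemma for the $g$-equation: using $g>0$ on $(\theta_0,Z)$ and $\int_0^Z g>0$, a classical Aronson--Weinberger-type argument produces $R_0>0$ and $\tau_0>0$ such that the solution $w$ of $\partial_t w=\Delta w+g(w)$ with initial datum $\theta'\mathbf{1}_{B_{R_0}}$ satisfies $w(\tau_0,0)>\theta$. I then fix $t$ large enough that $r_{\theta'}(t)>R_0$, pick an arbitrary $e\in S^{N-1}$, and set $y:=(r_{\theta'}(t)-R_0)e$, so that \thm{sym} yields
\[B_{R_0}(y)\subset B_{r_{\theta'}(t)}\subset\mc{U}_{\theta'}(t),\]
whence $u(t,\cdot)\ge\theta'\mathbf{1}_{B_{R_0}(y)}$. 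Since $f\ge g$ makes $u$ a supersolution of the $g$-equation, comparison with the translate of $w$ centred at $y$ gives $u(t+\tau_0,y)\ge\theta$; applying \thm{sym} at time $t+\tau_0$ then puts $y$ into $B_{r_\theta(t+\tau_0)+\delta\pi}$, producing the key inequality
\[r_\theta(t+\tau_0)\;\ge\;r_{\theta'}(t)-R_0-\delta\pi\;=\;r_\theta(t)+A(t)-R_0-\delta\pi.\]

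To close the argument, I use the contradiction hypothesis: for any $K>0$ we have $A(t)\ge K+R_0+\delta\pi$ for $t\ge T_K$, so the previous inequality reads $r_\theta(t+\tau_0)-r_\theta(t)\ge K$ on $[T_K,+\infty)$, and iterating $n$ times yields $r_\theta(T_K+n\tau_0)\ge r_\theta(T_K)+nK$. On the other hand, the uniform Lipschitz property of $f$ together with $f(t,0)=0$ makes $u$ a subsolution of the linear equation $\partial_t v=\Delta v+Lv$, so a standard Gaussian kernel computation yields a linear upper bound $r_\theta(t)\le 2\sqrt L\,t+O(\log t)$ as $t\to+\infty$. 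Choosing $K>2\sqrt L\,\tau_0$ and letting $n\to+\infty$ produces the desired contradiction. The main obstacle I anticipate is a clean version of the hair-trigger lemma in the generality of \eqref{f>g} --- $g$ is only continuous and may have several zeros beyond the structure described --- but it should be dispatched by comparing with a smooth bistable nonlinearity that lies below $g$ and still has positive wave speed.
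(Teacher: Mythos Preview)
Your proof is correct and coincides with the paper's own argument: both derive the key inequality $r_\theta(t+\tau_0)\ge r_{\theta'}(t)-R_0-\delta\pi$ by placing a ball of radius $R_0$ inside $\mc{U}_{\theta'}(t)$ and comparing with the Aronson--Weinberger solution of the $g$-equation, and both combine this with the linear spreading bound (from $f(t,z)\le Lz$) via an iteration. The only cosmetic difference is organizational --- the paper first isolates the bound on $r_\theta(t+T)-r_\theta(t)$ as a separate step and then combines, whereas you fold everything into a single contradiction --- and the paper handles your anticipated obstacle exactly as you suggest, by sliding a bistable nonlinearity under~$g$.
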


% \begin{lemma}[steepness property]\label{steep}
% Let $u$, $\t u$ be two solutions of the problem \eqref{evol-t} with compactly
% supported initial data $u_0,\t u_0\geq0,\not\equiv0$ for which invasion 
% occurs.
% Then, for all $\theta,\t\theta\in(0,1)$, the functions $\ri_{\theta}$ and
% $\t\ri_{\t\theta}$ defined by~\eqref{Ri} and associated with $u$ and $\t u$
% respectively, satisfy
% $$\liminf_{t\to+\infty}\big|\ri_{\theta}(t)-\t\ri_{\t\theta}(t)\big|
% <+\infty.$$
% \end{lemma}

Let us comment on this statement before giving the proof. Combined with~\eqref{spherical}, it implies that
the Hausdorff distance between the upper level sets of $u$ satisfies 
$$\liminf_{t\to+\infty}\,\dist\big(\mc{U}_{\theta'}(t),\mc{U}_\theta(t)\big)<+\infty.$$
This means that the width of the interface $\{\theta'\!<\!u\!<\!\theta\}$ is bounded along a sequence of times, which can be viewed as a steepness property of the profile.
We do not know whether the estimate holds true with $\liminf$ replaced by $\sup$.
This is left as an open question.
A positive answer is given in \cite{Zlatos-width} when $f$ is of combustion-type and is allowed
to have some $x$-dependence, but not $t$-dependence, in dimension $N\leq3$, together with a counter-example in dimension $N>3$.
Let us mention that the interface $\{\theta'\!<\!u\!<\!\theta\}$ may not have
bounded width 
if the initial datum does not decay sufficiently fast at infinity, see
\cite[Theorem 8.4]{Uchi}. Also, the restriction on the levels $\theta$, $\theta'$ to belong to  
the same positivity region of $f$ cannot be dropped, since we know from 
\cite[Theorem 3.3]{FMcL} that, for multistable nonlinearities, different level
sets can spread with different speeds\,\footnote{ This is proved for front-like initial
data, but then the same phenomenon is expected to occur for invading
solutions with compactly supported initial data.}.

\begin{proof}[Proof of Theorem \ref{steep}]
% Let $\ul u$, $\ol u$ be the solutions of \eqref{evol-t} emerging
% respectively from the initial data 
% $$\ul u_0(x)=\min\{u_0(x),\t u_0(x)\},\qquad
% \ol u_0(x)=\max\{u_0(x),\t u_0(x)\}.$$
% Then, by comparison, $\ul u\leq u,\t u\leq\ol u$ for all $t>0$ and $x\in\R^N$
% and thus, supposing, to fix the
% ideas, that $\theta\leq\t\theta$, we find that
% $$\big|\ri_{\theta}(t)-\t\ri_{\t\theta}(t)\big|\leq
% \ol{\mc{R}}^i_{\theta}(t)-\ul{\mc{R}}^i_{\t\theta}(t).$$
% Owing to \thm{sym}, it is sufficient to prove that the result holds with 
% $\re_{\theta'}$ replaced by $\ri_{\theta'}$.
We first show that the variation of $r_{\theta}(t)$ with respect to $t$ cannot tend to infinity, next we see that the variation with respect to $\theta$ is controlled by that with respect to 
$t$.

\smallskip
{\em Step 1.} Control of the variation with respect to $t$.\\
We claim that
\Fi{variation}
\forall \theta\in(\theta_0,1),\ T>0,\quad
\liminf_{t\to+\infty}\big(r_{\theta}(t+T)-r_{\theta}(t)\big)<+\infty.
\Ff
The function $u$ is a subsolution of the linear equation
$$\partial_t u=\Delta u+\zeta u,\quad t>0,\ x\in\R^N,$$
with time-independent zero-order term $\zeta:=\sup_{t,z>0}f(t,z)/z$.
It is straightforward to check (using for instance the heat kernel) that such equation admits a finite speed of spreading~$c^*$, which can  be actually computed:
$c^*=2\sqrt{\zeta}$. Namely, being $u$ a subsolution with compactly supported initial datum, there holds
$$\forall c>c^*,\quad
\lim_{t\to+\infty}\sup_{|x|\geq ct}u(t,x)=0.
$$
Since $u(t,x)>\theta$ if $|x|<r_{\theta}(t)$, this implies that, for any $c>c^*$,  $r_{\theta}(t)<ct$ for $t$ sufficiently large.
Suppose now that \eqref{variation} does not hold. Then there exist $ \theta\in(\theta_0,1)$ and $T,\tau>0$ such that 
$$\forall t\geq\tau,\quad
r_{\theta}(t+T)-r_{\theta}(t)>(c^*+1)T,
$$
which, applied recursively yields
$$\liminf_{n\to\infty}\frac{r_{\theta}(\tau+nT)}{nT}
\geq c^*+1.$$ 
This contradicts the fact that $r_{\theta}(t)<(c^*+1/2)t$ for large $t$.
%
%
%Assume by way of contradiction that, for any $L>0$, there exists $t_L$ such 
%that 
%\Fi{absurd}
%\forall t\geq t_L,\quad 
%r_{\theta'}(t)\geq r_{\theta}(t)+L.
%\Ff
%We will show that taking $L$ sufficiently large entails
%
%which is impossible because the above right-hand side is the spreading speed for
%
%and $u$ is a subsolution of this equation. 
% where $\ol{\mc{R}}^i_{\theta}$, $\ul{\mc{R}}^i_{\t\theta}$
% are the functions defined by~\eqref{Ri} associated with $\ul u$ and $\ol u$
% respectively. Now, from the invasion property and the boundedness of the
% supports of $u_0$, $\t u_0$ (which are smaller than $1$) there exists $T>0$
% such that $\ul u (T,x)\geq \ol u_0(x)$ for all $x\in\R^N$, whence, by
% comparison

\smallskip
{\em Step 2.} Control of the variation with respect to $\theta$.\\
Let $\theta_0$, $Z$, $g$ be from \eqref{f>g} and take $\theta_0<\theta'<\theta<Z$. It is clear that, up to 
perturbing the function $g$ in $(0,\theta_0)$ and then in a neighbourhood of $0$ and $Z$, it is not restrictive to assume that
$g<0$ in $(0,\theta_0)$ and $g(0)=g(Z)=0$, still preserving property \eqref{f>g}.
Then, it is known that the invasion occurs for solutions to
the equation
\Fi{subeq}
\partial_s v=\Delta v+g(v),\quad s>0,\ x\in\R^N,
\Ff
with large initial data.
Namely, by \cite[Remark 6.5]{AW} there exists $R>0$ such that 
the solution $v$ with initial datum
$v_0(x)=\theta'\1_{B_R}(x)$ converges locally uniformly to $1$ as $s\to+\infty$.
In particular, $v(T,0)>\theta$ for some $T>0$. 
Since $u$ invades, there exists $\tau>0$ such that $r_{\theta'}(t)>R$
for $t\geq\tau$. Take $t\geq\tau$, $\rho\in(R,r_{\theta'}(t))$ and $\xi\in 
B_{\rho-R}$. The inequality $u(t,\xi+x)\geq v_0(x)$ holds for all $x\in\R^N$, 
because if $|x|<R$ then $|\xi+x|<\rho<r_{\theta'}(t)$ and 
hence $u(t,\xi+x)>\theta'\geq v_0(x)$, whereas
$v_0(x)=0\leq u(t,\xi+x)$ if $|x|\geq R$.
Therefore, since any space/time 
translation of $u$ is a supersolution to~\eqref{subeq}, the parabolic comparison 
principle yields 
$$\forall s\geq0,\ x\in\R^N,\quad
u(t+s,\xi+x)\geq v(s,x).$$
Computed at $s=T$, $x=0$, this inequality gives
$u(t+T,\xi)\geq v(T,0)>\theta$. This means that 
$\xi\in\mc{U}_\theta(t+T)$, which by \eqref{spherical} is contained in 
$B_{r_\theta(t+T)+\delta\pi}$, where $B_\delta$ contains the support of the initial datum of $u$. Thus, by the arbitrariness of $\xi\in B_{\rho-R}$ and 
$\rho\in(R,r_{\theta'}(t))$, we deduce 
\Fi{space-variation}
\forall t\geq\tau,\quad
r_{\theta'}(t)-R\leq r_\theta(t+T)+\delta\pi.
\Ff
Owing to \eqref{variation}, this concludes the proof of the lemma.
%Combining this with property \eqref{absurd} we find that, for given $L>0$ and  $t\geq\tau:=\max\{T,t_L\}$, there holds
%$r_\theta(t+T)\geq r_{\theta}(t)+L-R-\delta\pi$ and thus, iterating,
%$$\forall n\in\N,\quad
%r_{\theta}(\tau+nS)\geq r_{\theta}(\tau)+n(L-R-\delta\pi).$$
%As a consequence, 
%$$\liminf_{n\to\infty}\frac{r_{\theta}(\tau+nS)}{nS}
%\geq\frac {L-R-\delta\pi}T,$$
%from which \eqref{>c*} follows provided $L$ is large enough.
\end{proof}

\begin{remark}\label{rk:steep}
In the case where $f$ is independent of $t$, or more in general satisfies
$f(\.+T,\.)\geq f$ for some $T>0$,
Theorem \ref{steep} can be improved to obtain a relation between the level sets
of two distinct solutions, relaxing at the same time the restriction on the initial datum. Namely, 
if $u^1$, $u^2$ are invading solutions with initial data $u^1_0$, $u^2_0$ smaller than $Z$ and decaying at most as $e^{-|x|^2}$,
%$u^1_0, u^2_0\geq0,\not\equiv0$.
then for all $\theta,\theta'\in(\theta_0,Z)$, the functions 
% $\ri_{\theta,1}$, $\ri_{\theta',2}$ defined by~\eqref{Ri} with $u=u_1,\ u_2$ 
% respectively, satisfy
% $$\liminf_{t\to+\infty}\big|\ri_{\theta,1}(t)-\ri_{\theta',2}(t)\big|
% <+\infty.$$
$r_\theta^1$, $r_{\theta'}^2$ associated with $u^1$,
$u^2$  satisfy
\Fi{steep2tindep}
\liminf_{t\to+\infty}\big|r_\theta^1(t)-r_{\theta'}^2(t)\big|
<+\infty.
\Ff
% 
% 
% 
% Let $\ul u$, $\ol u$ be the solutions of \eqref{evol-t} emerging
% respectively from the initial data 
% $$\ul u_0(x)=\min\{u_0(x),\t u_0(x)\},\qquad
% \ol u_0(x)=\max\{u_0(x),\t u_0(x)\}.$$
% Then, by comparison, $\ul u\leq u,\t u\leq\ol u$ for all $t>0$ and $x\in\R^N$
% and thus, supposing to fix the
% ideas that $\t\theta\leq\theta$, we find that
% $$\big|\ri_{\theta}(t)-\t\ri_{\t\theta}(t)\big|\leq
% \ol{\mc{R}}^i_{\t\theta}(t)-\ul{\mc{R}}^i_{\theta}(t),$$
% where $\ol{\mc{R}}^i_{\theta}$, $\ul{\mc{R}}^i_{\t\theta}$
% are the functions defined by~\eqref{Ri} associated with $\ul u$ and $\ol u$
% respectively. 
% . Then, 
% for any
% $L>0$,
% there exists $t_L$ such that 
% $$\forall t\geq t_L,\quad 
% \ol{\mc{R}}^i_{\t\theta}(t)\geq\ul{\mc{R}}^i_{\theta}(t)+L.$$
Indeed, by \eqref{invasion}, at time $nT$ with $n$ large enough, both $u^1$ and $u^2$ are larger than the function~$v_0$ used in the 
step 2 of the proof of Theorem \ref{steep}.
Thus, if $f$ fulfils $f(\.+T,\.)\geq f$ then the solution $u$ emerging from $v_0$ satisfies
$$\forall t\geq0,\ x\in\R^N,\quad
u(t,x)\leq \min\{u^1(t+nT,x),u^2(t+nT,x)\}.$$
On the other hand, $u$ is invading and, by comparison with the heat equation, decays not faster than $e^{-\frac{|x|^2}t}$. As a consequence, at time $mT$ with $m$ large, it is greater than $u^1_0$, $u^2_0$ and therefore
$$\forall t\geq0,\ x\in\R^N,\quad
u(t+mT,x)\geq \max\{u^1(t,x),u^2(t,x)\}.$$
These estimates imply that the level sets of $u^1$ and $u^2$ are trapped between  those of~$u$, up to a time shift.
One can then derive \eqref{steep2tindep} applying Theorem \ref{steep} - more precisely properties \eqref{space-variation} and \eqref{variation} - to $u$.
\end{remark}

% \begin{proposition}\label{balls}
% The solution $u$ to \eqref{evol-t} with initial datum 
% $$u_0(x)=\1_{B_1(\xi)}(x)+\1_{B_1(-\xi)}(x),$$
% with $|\xi|$ large enough,
% is not asymptotically symmetric in the sense of Definition~\ref{def:sym}.
% \end{proposition}

\begin{proposition}\label{pro:twins}
The conclusion of Theorem \ref{gen-anti} holds if $u_1\equiv u_2$.
\end{proposition}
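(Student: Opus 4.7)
The plan is to sandwich $\mc{U}_\theta(t)$ between two ``dumbbell-shaped'' regions via KPP sub-additivity, and then show that this bimodal geometry is incompatible both with a near-spherical level set (property (i)) and with a moving radial profile (property (ii)), once $|\xi|$ is large enough.

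The key algebraic input is that \eqref{KPP}, through the nonincrease of $z\mapsto f(t,z)/z$ and a convex combination, yields the sub-additivity $f(t,z_1+z_2)\leq f(t,z_1)+f(t,z_2)$. Let $v_1$ denote the solution of \eqref{evol-t} from $u_1$ and set $v_2(t,x):=v_1(t,x+\xi)$. By sub-additivity, $v_1+v_2$ is a supersolution with initial datum $u_0$, hence $u\leq v_1+v_2$ by comparison, while $u\geq\max(v_1,v_2)$. Applying \thm{sym} to $v_1$ with $\supp u_1\subset B_\delta$, one obtains the two-sided inclusion
$$B_{r^1_\theta(t)}(0)\cup B_{r^1_\theta(t)}(-\xi)\ \subset\ \mc{U}_\theta(t)\ \subset\ B_{R_1(t)}(0)\cup B_{R_1(t)}(-\xi),$$
where $R_1(t):=r^1_{\theta/2}(t)+\delta\pi$, using that $u>\theta$ together with $u\leq v_1+v_2$ forces $\max(v_1,v_2)>\theta/2$.

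To rule out (i), the diameter inclusion gives $\re_\theta(t)\geq|\xi|/2+r^1_\theta(t)$, while a direct geometric check shows that any ball $B_\rho(x_0)$ inside $B_{R_1}(0)\cup B_{R_1}(-\xi)$ has $\rho\leq R_1$ (for any unit $\hat w\perp\xi$, the two points $x_0\pm\rho\hat w$ would force contradictory signs of $x_0\.\hat w$ when $\rho>R_1$, since $\hat w\perp\xi$ makes both $|x_0\pm\rho\hat w|$ and $|x_0\pm\rho\hat w+\xi|$ exceed $R_1$ on one side). Hence $\ri_\theta(t)\leq R_1(t)$ and $\re_\theta(t)-\ri_\theta(t)\geq|\xi|/2+r^1_\theta(t)-r^1_{\theta/2}(t)-\delta\pi$. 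Theorem \ref{steep} applied to $v_1$ supplies a sequence $t_n\to\infty$ with $r^1_{\theta/2}(t_n)-r^1_\theta(t_n)\leq M<+\infty$, so $\re_\theta(t_n)-\ri_\theta(t_n)\geq|\xi|/2-M-\delta\pi$ is bounded below by a positive constant for $|\xi|>2(M+\delta\pi)$, contradicting (i).

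To rule out (ii), suppose $u(t,\cdot)-\phi(t,|\cdot-\Gamma(t)|)\to 0$. The reflection symmetry of $u$ about the bisector $H$ of $\{0,-\xi\}$ forces $\Gamma(t)\in H$, so $|\Gamma(t)|\geq|\xi|/2$. Along the ray from $\Gamma(t)$ toward the origin, $\mc{U}_\theta(t)$ extends to distance at least $|\Gamma(t)|+r^1_\theta(t)$ (it contains $B_{r^1_\theta(t)}(0)$); along a ray perpendicular to $\xi$ whose points are equidistant from $0$ and $-\xi$, the upper-bound inclusion restricts the extent to at most $\sqrt{R_1(t)^2-|\Gamma(t)|^2}$. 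Since $\phi(t,\cdot)$ is common to all rays, consistency forces $|\Gamma(t)|+r^1_\theta(t)\leq\sqrt{R_1(t)^2-|\Gamma(t)|^2}+o(1)$; squaring, using $|\Gamma(t)|^2\geq|\xi|^2/4$ together with the subsequence from Theorem \ref{steep}, one gets $|\xi|\leq 2(M+\delta\pi)$, contradicting the large-$|\xi|$ choice. The main anticipated obstacle is precisely this last step: $\Gamma(t)$ is only pinned to the bisector (not to its midpoint) and may drift along it, the sense of convergence in (ii) is unspecified, and the perpendicular ray equidistant from both bump centers needs a minor adjustment in $N=2$. The ray argument circumvents all three issues, but requires careful geometric bookkeeping.
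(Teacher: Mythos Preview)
Your treatment of property~(i) is correct and essentially identical to the paper's: the sub-additivity \eqref{s+s}, the sandwich $\max(v_1,v_2)\leq u\leq v_1+v_2$, the diameter lower bound for $\re_\theta$, the width bound in a direction orthogonal to $\xi$ for $\ri_\theta$, and the appeal to Theorem~\ref{steep} are exactly the ingredients the paper uses.

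The gap is in your argument for~(ii). The assertion that the reflection symmetry of $u$ about $H$ ``forces $\Gamma(t)\in H$'' is not justified: Definition~\ref{def:sym}(ii) only asserts the existence of \emph{some} pair $(\phi,\Gamma)$, and the symmetry of $u$ merely shows that the reflected center $\mc{T}\Gamma(t)$ works equally well with the same $\phi$; it does not pin $\Gamma(t)$ to~$H$. Since $\phi(t,\cdot)$ is not assumed monotone, two distinct centers can yield the same limit without being close (think of $\phi$ flat on a long interval). Your subsequent ray comparison hinges on $|\Gamma(t)|\geq|\xi|/2$, which collapses once $\Gamma(t)\in H$ fails; and even granting it, the ``consistency'' step comparing two rays would need a quantitative gap between levels (say $\theta'<\theta$) to survive the $o(1)$ error, as well as the extra orthogonal direction you yourself flag as missing when $N=2$.

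The paper avoids all of this by never locating $\Gamma(t)$. It shows, via the very same estimates you used for~(i) but with $\theta$ replaced by some $\theta'<\theta$, that $\liminf_{t\to+\infty}\big(\ri_{\theta'}(t)-\re_\theta(t)\big)<0$ for $|\xi|$ large. Then it proves, independently of $\xi$, that (ii) implies $\ri_{\theta'}(t)\geq\re_\theta(t)$ for large~$t$: pick $\t\theta\in(\theta',\theta)$, set $\rho(t):=\max\{r:\phi(t,r)=\t\theta\}$, and observe that $\partial B_{\rho(t)}(\Gamma(t))\subset\mc{U}_{\theta'}(t)$ while $\mc{U}_\theta(t)\subset\ol B_{\rho(t)}(\Gamma(t))$; the star-shapedness of $\mc{U}_{\theta'}(t)$ from \thm{sym} then upgrades the first inclusion to $\ol B_{\rho(t)}(\Gamma(t))\subset\mc{U}_{\theta'}(t)$, whence $\ri_{\theta'}(t)\geq\rho(t)\geq\re_\theta(t)$, contradicting the negative $\liminf$. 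Note that this uses $\Gamma(t)$ only as the center of a ball, with no symmetry constraint whatsoever. Your (i) argument already produces everything needed to run this; you should replace your (ii) paragraph accordingly.
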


\begin{proof}
% Let $u$ be the solution to \eqref{evol-t} with initial datum 
% $u_0(x)=\1_{B_1(\xi)}(x)+\1_{B_1(-\xi)}(x)$. 
% We will show that the solution $u$ with initial datum $u_0$ is not 
% \ar\ for $|\xi|$ sufficiently large using the comparison 
% principle. We will essentially compare the solution $u$ with the solution 
First, under the standing assumptions, the existence of a unique classical bounded solution for the Cauchy problem associated with \eqref{evol-t} follows from the standard parabolic theory. Furthermore, 
by comparison with the equation with reaction term $\inf_{t>0}f(t,z)$, we infer 
that under the KPP assumption \eqref{KPP} any nontrivial solution to \eqref{evol-t}
satisfies the invasion property \eqref{invasion} with $Z$ given by \eqref{KPP}, see \cite{AW}.

Let $w$ be the solution to \eqref{evol-t} with initial datum $u_1$, and let
$\t\delta$ be such that $\supp u_1\subset B_{\t\delta}$. 
Call $(\t r_\theta)_{\theta\in(0,Z)}$ the family of functions $r_\theta$ provided by
\thm{sym} for the solution~$w$. 
We know from the  parabolic strong comparison principle that $u(t,x)> 
w(t,x)$ for all
$t>0$, $x\in\R^N$. Thus, for given $\theta\in(0,Z)$ and $t$  
large enough so that $\t r_\theta(t)$ is 
defined, we have that
$$\min_{|x|\leq\t r_\theta(t)} u(t,x)>\min_{|x|\leq\t r_\theta(t)} w(t,x)\geq\theta,$$
whence, in particular, $u(t,\t r_\theta(t)\xi/|\xi|)>\theta$. Comparing now 
$u$ with $w(t,x+\xi)$ we get
$$\min_{|x+\xi|\leq\t r_\theta(t)}u(t,x)>\min_{|x+\xi|\leq\t r_\theta(t)}w(t,x+\xi)\geq\theta,$$
and then $u(t,-\xi-\t r_\theta(t)\xi/|\xi|)>\theta$. It follows that the diameter 
of the upper level set~$\mc{U}_\theta(t)$ is at least $|\xi|+2\t r_\theta(t)$, 
which means that 
\Fi{re>}
\re_\theta(t)\geq \t r_\theta(t)+\frac{|\xi|}2.
\Ff
% 
% 
% 
% \vspace{20pt}
% To this 
% end, we need some properties of this function.
% Clearly, $w$ is spherically symmetric, namely,
% $w(t,x)=\psi(t,|x-\xi|)$. We then know from Theorem \ref{thm:sym} (recall that 
% invasion occurs for all nontrivial solutions due to the KPP hypothesis) that, 
% for any  
% $\theta\in(0,1)$, there is $t_\theta>0$ such that
% \Fi{rtheta}
% \forall t\geq t_\theta,\ \exists r_\theta(t)>0,\quad
% \psi(t,r)>\theta \iff r<r_\theta(t).
% \Ff
% 
% We start with deriving the lower bound for the function $\re_\theta(t)$ 
% associated with $u$ by considering the
% expansion of the level set in the direction~$\xi$.
% % Let $w$ be the solution to \eqref{evol-t} with initial datum
% % $\1_{B_1(\xi)}$. The function 
% The parabolic comparison principle implies that $u(t,x)\geq w(t,x)$ for all
% $t\geq0$, $x\in\R^N$. Hence, in particular,
% $$u\left(t,\left(1+\frac{r_\theta(t)}{|\xi|}\right)\xi\right)\geq
% \psi(t,r_\theta(t))=\theta,$$
% for $t>t_\theta$.
% Observe that the function $u$ is symmetric with respect to the hyperplane
% $\{x\.\xi=0\}$, because its initial datum $u_0$ is. As a consequence,
% $$u\left(t,-\left(1+\frac{r_\theta(t)}{|\xi|}\right)\xi\right)\geq\theta.$$
% We eventually infer that the diameter of the set $\{x\,:\,u(t,x)\geq\theta\}$ is
% at least $2|\xi|+2r_\theta(t)$, whence 
% \Fi{re>}
% \forall t\geq t_\theta,\quad
% \re_\theta(t)\geq|\xi|+r_\theta(t).
% \Ff

Next, we derive an upper bound for $\ri_{\theta}(t)$ by
considering the
expansion of the level set in a direction $\eta\in
S^{N-1}$ orthogonal to $\xi$. It is just here that we use the concavity
hypothesis \eqref{KPP}: it implies that the sum of supersolutions is a
supersolution, because
\Fi{s+s}
\forall\, 0<\alpha\leq\beta,\quad
f(t,\alpha+\beta)\leq
\frac{f(t,\beta)}\beta(\alpha\!+\!\beta)\!=\!\frac{f(t,\beta)}
\beta\alpha\!+\!f(t,\beta)\!\leq\!f(t,\alpha)\!
+\!f(t,\beta).
\Ff
Hence, the function $w(t,x)+w(t,x+\xi)$
is a supersolution of \eqref{evol-t}, coinciding with $u$ at 
$t=0$, and then, by
comparison, $u(t,x)\leq w(t,x)+w(t,x+\xi)$ for $t\geq0$, $x\in\R^N$.
We deduce that if $x\.\eta\geq \t r_{\theta/2}(t)+\t\delta\pi$ then
$|x|,|x+\xi|\geq \t r_{\theta/2}(t)+\t\delta\pi$ and thus
$$u(t,x)\leq w(t,x)+w(t,x+\xi)\leq\theta.$$
This means that the width of the set $\mc{U}_{\theta}(t)$ is at most 
$2\t r_{\theta/2}(t)+2\t\delta\pi$, whence
% that is, 
% \Fi{width}
% \{x\in\R^N\,:\,u(t,x)>\theta'\}\subset\{x\in\R^N\,:\,
% x\.\eta<r_{\theta'/2}(t)\},
% \Ff
\Fi{ri<}
\ri_{\theta}(t)\leq \t r_{\theta/2}(t)+\t\delta\pi.
\Ff

Gathering together \eqref{re>} and \eqref{ri<} we eventually obtain
$$\liminf_{t\to+\infty}
\big(\ri_{\theta}(t)-\re_{\theta}(t)\big)\leq\liminf_{t\to+\infty}
\big(\t r_{\theta/2}(t)-\t r_{\theta}(t)\big)+\t\delta\pi-\frac{|\xi|}2.$$
Now, we know from Theorem \ref{steep} that the first term of the right-hand
side 
is finite (and independent of $\xi$) and therefore $u$ 
does not fulfil condition (i) of Definition~\ref{def:sym} provided $|\xi|$ is 
sufficiently large. 

Finally, if we use \eqref{ri<} with $\theta$ replaced by any 
$\theta'\in(0,\theta)$, we get 
$$\liminf_{t\to+\infty}\big(\ri_{\theta'}(t)-\re_{\theta}(t)\big)<0,$$
for a possibly larger $|\xi|$. This contradicts the property (ii) of Definition~\ref{def:sym}, because the latter implies that $\ri_{\theta'}(t)> \re_\theta(t)$ for $t$ large enough.
To see this, consider $\phi$ and $\Gamma$ given by  Definition~\ref{def:sym} (ii). Take $\t\theta\in(\theta',\theta)$ and define
$$\rho(t):=\max\{r>0\,:\,\phi(t,r)=\t\theta\}.$$
This quantity is well defined for $t$ large enough because $u(t,x)-\phi(t,|x-\Gamma(t)|)\to0$ as $t\to+\infty$ uniformly in $x$ and $u$ is invading. There holds that
$$\phi(t,r)\begin{cases}
=\t\theta & \text{for }r=\rho(t)\\
<\t\theta & \text{for }r>\rho(t),
\end{cases}$$
and therefore, for $t$ sufficiently large,
$$\partial B_{\rho(t)}(\Gamma(t))\subset\mc{U}_{\theta'}(t),\qquad
\mc{U}_{\theta}(t)\subset \ol B_{\rho(t)}(\Gamma(t)).$$
The second inclusion implies that $\re_\theta(t)\leq\rho(t)$.
On the other hand, for $t$ large enough, we have that $\mc{U}_{\theta'}(t)$ is star-shaped owing to \thm{sym} and thus the first inclusion yields $\ol B_{\rho(t)}(\Gamma(t))\subset
\mc{U}_{\theta'}(t)$, whence $\ri_{\theta'}(t)\geq\rho(t)> \re_\theta(t)$.
\end{proof}

\subsection{The general construction}
This section is devoted to the proof of Theorem~\ref{gen-anti}. First, we need
an improvement of Theorem \ref{steep} which allows one to compare the position
of level sets of distinct solutions. We have seen in Remark \ref{rk:steep}
that this can be achieved with minor modification in the time-independent case. 
For the time-dependent equation \eqref{evol-t}, an alternative argument is required. We are able to perform it under the KPP hypothesis.
% \Fi{comb}
% \forall z\in(0,\theta_0],\quad
% f(z)\geq0,\qquad
% \forall z\in(\theta_0,1),\quad
% f(z)>0.
% \Ff

\begin{theorem}\label{steep2}
Assume that $f$ satisfies \eqref{KPP}.
Let $u^1$, $u^2$ be two solutions of \eqref{evol-t} with 
compactly
supported initial data 
% $0\leq u^1_0, u^2_0<1$ for which invasion occurs.
$u^1_0, u^2_0\geq0,\not\equiv0$.
Then, for all $\theta,\theta'\in(0,1)$, the functions 
% $\ri_{\theta,1}$, $\ri_{\theta',2}$ defined by~\eqref{Ri} with $u=u_1,\ u_2$ 
% respectively, satisfy
% $$\liminf_{t\to+\infty}\big|\ri_{\theta,1}(t)-\ri_{\theta',2}(t)\big|
% <+\infty.$$
$r_\theta^1$, $r_{\theta'}^2$ given by \thm{sym} with $u=u_1$ and 
$u=u_2$ respectively, satisfy
$$\liminf_{t\to+\infty}\big|r_\theta^1(t)-r_{\theta'}^2(t)\big|
<+\infty.$$
\end{theorem}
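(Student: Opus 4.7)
The plan is to adapt the strategy of Remark~\ref{rk:steep} to the time-dependent setting by replacing the time-translation comparison with the KPP subadditivity \eqref{s+s}. Let $\bar R>0$ contain both $\supp u^1_0$ and $\supp u^2_0$, and let $W$ be the solution of \eqref{evol-t} with initial datum $u^1_0+u^2_0$. The subadditivity of $f$ under KPP (cf.~\eqref{s+s}) implies that $u^1+u^2$ is a supersolution of \eqref{evol-t}, so $W\le u^1+u^2$; comparison of initial data also gives $W\ge\max(u^1,u^2)$. Applying \thm{sym} to $u^1$, $u^2$, and $W$ (all supported in $B_{\bar R}$) yields, for every $\theta\in(0,Z)$ and $t$ large,
\[
\max\bigl(r^1_\theta(t),r^2_\theta(t)\bigr)\le r^W_\theta(t)\le \max\bigl(r^1_{\theta/2}(t),r^2_{\theta/2}(t)\bigr)+\bar R\pi.
\]

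Under KPP, \eqref{f>g} holds with $\theta_0$ arbitrarily small, so Theorem~\ref{steep} applies to each of $W$, $u^1$, $u^2$. Its Step~1 actually shows that the ``fast-growth'' set $\{t:\, r^v_\theta(t+T)-r^v_\theta(t)>(c^*+1)T\}$ (for $v\in\{W,u^1,u^2\}$) has Lebesgue density zero, otherwise the average growth would exceed the linear spreading speed $c^*$. Intersecting the three density-one complements produces a common sequence $t_n\to+\infty$ along which $r^v_{\theta'}(t_n)-r^v_\theta(t_n)\le C$ for any $\theta'<\theta$ and $v\in\{W,u^1,u^2\}$. Feeding this into the sandwich gives $r^W_\theta(t_n)=\max(r^1_\theta,r^2_\theta)(t_n)+O(1)$.

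To separate $r^1_\theta$ from $r^2_\theta$ beyond their maximum, I would exploit the KPP multiplicative property: for any $c\ge 1$, $cu^i$ is a supersolution of \eqref{evol-t}. By invasion and the parabolic Harnack inequality, the ratio $u^i(t,x)/u^j(t,x)$ is uniformly bounded on the region where both solutions are close to $Z$, so for $c$ large enough one obtains $cu^i(t,\cdot)\ge u^j(t,\cdot)$ on a neighborhood containing $\mc{U}^i_\theta(t)\cup\mc{U}^j_\theta(t)$. Parabolic comparison then gives $\mc{U}^j_\theta(t)\subset\mc{U}^i_{\theta/c}(t)$ for large $t$, hence $r^j_\theta(t)\le r^i_{\theta/c}(t)+\bar R\pi$. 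On the common sequence $t_n$, Theorem~\ref{steep} for $u^i$ gives $r^i_{\theta/c}(t_n)\le r^i_\theta(t_n)+O(1)$, yielding $|r^1_\theta(t_n)-r^2_\theta(t_n)|\le C$. The statement for arbitrary $\theta,\theta'\in(0,1)$ then follows by using Theorem~\ref{steep} applied to $u^2$ to bridge between $r^2_\theta$ and $r^2_{\theta'}$.

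The main obstacle is the multiplicative comparison $cu^i\ge u^j$ on the level-set region: globally this fails when the convex hulls of $\supp u^1_0,\supp u^2_0$ differ, because the Gaussian tails of $u^i(t,\cdot)$ and $u^j(t,\cdot)$ then decay at different exponential rates at infinity. Restricting the comparison to the interior of the level set, where invasion forces both solutions close to $Z$, requires a careful moving-window argument that uses the specific KPP structure and is not available under the weaker assumption \eqref{f>g}.
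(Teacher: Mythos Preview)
Your proposal has two genuine gaps, and you have essentially identified the more serious one yourself.

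\textbf{The common-sequence issue.} Step~1 of Theorem~\ref{steep} does not show that the ``fast-growth'' set has density zero; it only shows that it cannot contain a full half-line $[\tau,+\infty)$, which is exactly what is needed to conclude that the $\liminf$ is finite. Upgrading this to a density statement would require additional work. More importantly, even granting your density claim, you would then be applying Theorem~\ref{steep} to three different solutions and intersecting the resulting good sets; but Theorem~\ref{steep} itself relies on the space-variation estimate \eqref{space-variation}, which involves a time shift $T$ that depends on the solution. Synchronizing the good times for $W$, $u^1$, $u^2$ is not automatic.

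\textbf{The multiplicative comparison.} Your plan to obtain $cu^i\geq u^j$ on the level-set region via Harnack and then propagate it by parabolic comparison does not work: the comparison principle requires the inequality on the full parabolic boundary of the region, and on the lateral boundary (where $u^i$ and $u^j$ are of order~$\theta$, not close to~$Z$) you have no control on the ratio $u^j/u^i$. A moving-window argument would need a supersolution barrier that dominates $u^j$ near the interface, and nothing in your outline produces one.

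The paper bypasses both difficulties with a single device. Instead of comparing $u^1$ and $u^2$ directly, it sandwiches them between two auxiliary solutions $\ul u$ and $\ol u$ emerging from $\ul u_0:=\min\{\tau_\zeta u^1_0,u^2_0\}$ and $\ol u_0:=\max\{u^1_0,u^2_0\}$. The inequalities $\ul u\leq\tau_\zeta u^1,u^2\leq\ol u$ hold \emph{globally and for all large $t$}, so the corresponding bounds on $r^1_\theta,r^2_{\theta'}$ in terms of $\ul r_{\ol\theta}$ and $\ol r_{\ul\theta}$ are pointwise in $t$, not merely along a subsequence. The remaining task is to bound $\ol r_{\ul\theta}-\ul r_{\ol\theta}$, and here the KPP structure is used in a different way than you propose: since $\ul u_0\not\equiv0$ and $\ol u_0$ has compact support, one can cover $\ol u_0$ by finitely many translates, $\ol u_0\leq\sum_{j=1}^n\tau_{x_j}\ul u_0$. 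By the subadditivity \eqref{s+s} the right-hand side is a supersolution, so $\ol u\leq\sum_j\tau_{x_j}\ul u$ globally, yielding $\ol r_{\ul\theta}(t)\leq\ul r_{\ul\theta/n}(t)+C$ for all large $t$. Now a single application of Theorem~\ref{steep} to $\ul u$ alone finishes the proof---no synchronization of sequences, no Harnack inequality, no boundary issue.
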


\begin{proof}
We let $\tau_y$ denote the translation acting on a (possibly time independent) 
function $u$ as $\tau_y u(t,x):=u(t,x-y)$. Take $\zeta\in\R^N$ such that 
$\ul u_0:=\min\{\tau_{\zeta} u^1_0,u^2_0\}$ is not identically equal to $0$,
and let $\ul u$ be the solution of \eqref{evol-t} with initial datum $\ul u_0$.
Then, let $\ol u$ be the solution of 
\eqref{evol-t} with initial datum $\ol u_0:=\max\{u^1_0,u^2_0\}$. By comparison, 
we have that
\Fi{minmax}
\ul u\leq \tau_{\zeta}u^1,u^2,\qquad \ol u\geq u^1,u^2.
\Ff
Set $\ul\theta:=\min\{\theta,\theta'\}$, $\ol\theta:=\max\{\theta,\theta'\}$ and 
let $\ul r_{\ol\theta},\ol r_{\ul\theta}$ denote the functions provided by 
\thm{sym} with $u=\ul u$ and $u=\ol u$ respectively (recall that invasion always occurs by \eqref{KPP}). Our goal is to 
bound $r_\theta^1, r_{\theta'}^2$ from below by $\ul r_{\ol\theta}$ (up to additive constants) and from above by
$\ol r_{\ul\theta}$ and finally to control the difference between
$\ol r_{\ul\theta}$ and $\ul r_{\ol\theta}$. Of course, the last 
step will be the most involved, and this is where we require the KPP hypothesis
\eqref{KPP}, together with Theorem \ref{steep}.

Let $t$ be large enough so that \eqref{spherical} applies for the various functions and values of~$\theta$ in play.
From the first inequality in \eqref{minmax} and \eqref{spherical} we infer from one 
hand that 
$$|x|= r_{\theta'}^2(t)+\delta_2\pi\implies 
\ul u(t,x)\leq u^2(t,x)\leq \theta'\leq\ol\theta
\implies |x|\geq \ul r_{\ol\theta}(t),$$
that is, 
\Fi{est1a}
\ul r_{\ol\theta}(t)\leq r_{\theta'}^2(t)+\delta_2\pi.
\Ff
From the other hand,
$$|x|=|\zeta|+r_{\theta}^1(t)+\delta_1\pi\implies 
\ul u(t,x)\leq \tau_{\zeta}u^1(t,x)\leq \theta\leq\ol\theta
\implies |x|\geq \ul r_{\ol\theta}(t),$$
whence
\Fi{est1b}
\ul r_{\ol\theta}(t)\leq |\zeta|+r_{\theta}^1(t)+\delta_1\pi.
\Ff
Similarly, the second inequality in \eqref{minmax} and \eqref{spherical} yield
\Fi{est2}
r_{\theta}^1(t),r_{\theta'}^2(t)\leq \ol r_{\ul\theta}(t)+\ol\delta\pi,
\Ff
where $\ol\delta:=\max\{\delta_1,\delta_2\}$ (observe that $\supp \ol u_0\subset 
B_{\ol\delta}$).

We now estimate $\ol r_{\ol\theta}$ in terms of $\ul r_{\ul\theta}$. Since $\ul 
u_0\not\equiv0$ and $\ol u_0$ is compactly supported, one can find a 
family of points $\{x_1,\dots,x_n\}$ such that 
$$\ol u_0\leq \sum_{j=1}^n\tau_{x_j}\ul u_0.$$
Recall that the KPP hypothesis yields \eqref{s+s}, which, applied recursively, 
implies that the sum of supersolutions is a supersolution. Therefore, by 
comparison, 
$$\ol u\leq \sum_{j=1}^n\tau_{x_j}\ul u$$
holds true for all $t>0$.
Then, with the same argument as before (notice that $\supp\ul u_0\subset 
B_{\delta_2}$) we find that
$$|x|=\max_{j=1,\dots, n}|x_j|+\ul r_{\ul\theta/n}(t)+\delta_2\pi\implies 
\ol u(t,x)\leq \sum_{j=1}^n\tau_{x_j}\ul u(t,x)\leq 
\sum_{j=1}^n\ul\theta/n=\ul\theta,$$
whence 
$$ \ol r_{\ul\theta}(t)\leq \max_{j=1,\dots, n}|x_j|+\ul 
r_{\ul\theta/n}(t)+\delta_2\pi.$$
Therefore, owing to Theorem \ref{steep},
$$\liminf_{t\to+\infty}\big(\ol r_{\ul\theta}(t)-\ul r_{\ol\theta}(t)\big)\leq
\max_{j=1,\dots, n}|x_j|+\delta_2\pi+\liminf_{t\to+\infty}\big(
\ul r_{\ul\theta/n}(t)-\ul r_{\ol\theta}(t)\big)<+\infty.$$

The proof is thereby concluded, because, by \eqref{est1a}-\eqref{est2},
\[\big|r_\theta^1-r_{\theta'}^2\big|=\max 
\{r_\theta^1,r_{\theta'}^2\}-\min\{r_\theta^1,r_{\theta'}^2\}\leq
\ol r_{\ul\theta}+\ol\delta\pi-\ul r_{\ol\theta}+\delta_2\pi+|\zeta|+\delta_1\pi.\qquad
\qedhere
\]
% Owing to \thm{sym}, it is sufficient to show the result with 
% $\re_{\theta',2}$ replaced by $\ri_{\theta',2}$, given by \eqref{Ri}  with 
% $u=u_2$.
\end{proof}

\begin{proof}[Proof of Theorem \ref{gen-anti}]
We want to adapt the arguments of the proof of Proposition~\ref{pro:twins}.
First, we consider a nonnegative, not identically 
equal to $0$, continuous functions $\ul w_0,$ with compact support, 
satisfying, for some $x_1,x_2\in\R^N$, 
$$\forall x\in\R^N,\quad 
\ul w_0(x)\leq u_1(x+x_1)\,,\, u_2(x+x_2).$$
Let $\ul w$ be the solution emerging from $\ul w_0$ and $\ol w$ be the one emerging from  $\max\{u_1,u_2\}$, and call 
$(\ul r_\theta)_{\theta\in(0,1)}$, $(\ol r_\theta)_{\theta\in(0,1)}$ the families of functions provided by
\thm{sym} associated with these solutions. The comparison principle yields, for $t>0$ and $x\in\R^N$, 
$$u(t,x)\leq \ol w(t,x)+\ol w(t,x+\xi),\qquad
u(t,x)\geq\max\big\{ \ul w(t,x-x_1)\,,\, \ul w(t,x-x_2+\xi)\big\}.$$
For $\theta\in(0,1)$, using the first inequality above together with the argument employed to derive \eqref{ri<} we find that $\ri_{\theta}(t)\leq \ol r_{\theta/2}(t)+\ol\delta\pi$,
while from the second inequality, computed at $x=x_1+\ul r_\theta(t)\xi/|\xi|$ and  
$x=x_2-\xi-\ul r_\theta(t)\xi/|\xi|$, we deduce 
$$\re_\theta(t)\geq\ul r_\theta(t)+\frac{|\xi|-|x_1|-|x_2|}2.$$
Consequently,
$$\liminf_{t\to+\infty}
\big(\ri_{\theta}(t)-\re_{\theta}(t)\big)\leq\liminf_{t\to+\infty}
\big(\ol r_{\theta/2}(t)-\ul 
r_{\theta}(t)\big)+\ol\delta\pi-\frac{|\xi|-|x_1|-|x_2|}2.$$
We now make use of Theorem \ref{steep2}. It entails that the first term of the 
right-hand side is finite and therefore $u$ 
does not fulfil condition (i) of Definition~\ref{def:sym} if $|\xi|$ is 
sufficiently large. 
Clearly, one can get the above inequality with $\ri_{\theta}(t)$ and $\ol 
r_{\theta/2}$ replaced by $\ri_{\theta'}(t)$ and $\ol r_{\theta'/2}$ 
respectively, for any $\theta'\in(0,\theta)$. Then, choosing $|\xi|$ very large one infers that 
 condition (ii) of Definition~\ref{def:sym} is violated as well because, as seen 
at the end of the proof of Proposition \ref{pro:twins}, it entails $\ri_{\theta'}(t)> \re_\theta(t)$ for $t$ large enough.
\end{proof}

\bigskip


\begin{thebibliography}{1}

\bibitem{AW}
D.~G. Aronson and H.~F. Weinberger.
\newblock Multidimensional nonlinear diffusion arising in population genetics.
\newblock {\em Adv. in Math.}, 30(1):33--76, 1978.

\bibitem{NATO}
H.~Berestycki.
\newblock The Influence of Advection on the Propagation of fronts in 
Reaction-Diffusion Equations.
\newblock {\em Nonlinear PDE's in Condensed Matter and 
Reactive Flows, NATO Science series C: Mathematical and physical Sciences}, 
569:11--48, 2002.

\bibitem{Ducrot}
A.~Ducrot.
\newblock On the large time behaviour of the multi-dimensional {F}isher-{KPP}
  equation with compactly supported initial data.
\newblock {\em Nonlinearity}, 28(4):1043--1076, 2015.

\bibitem{FMcL}
P.~C. Fife and J.~B. McLeod.
\newblock The approach of solutions of nonlinear diffusion equations to
  travelling front solutions.
\newblock {\em Arch. Ration. Mech. Anal.}, 65(4):335--361, 1977.

\bibitem{GNN}
B.~Gidas, W.~M. Ni, and L.~Nirenberg.
\newblock Symmetry and related properties via the maximum principle.
\newblock {\em Comm. Math. Phys.}, 68(3):209--243, 1979.

\bibitem{Jones}
C.~K. R.~T. Jones.
\newblock Asymptotic behaviour of a reaction-diffusion equation in higher space
  dimensions.
\newblock {\em Rocky Mountain J. Math.}, 13(2):355--364, 1983.

\bibitem{vsJones2}
V.~Roussier.
\newblock Stability of radially symmetric travelling waves in
  reaction-diffusion equations.
\newblock {\em Ann. Inst. H. Poincar\'e Anal. Non Lin\'eaire}, 21(3):341--379,
  2004.
  
\bibitem{Serrin}
 J.~Serrin.
 \newblock A symmetry problem in potential theory.
 \newblock {\em Arch. Rational Mech. Anal.}, 43:304--318, 1971.

\bibitem{Uchi}
K.~Uchiyama.
\newblock The behavior of solutions of some nonlinear diffusion equations for
  large time.
\newblock {\em J. Math. Kyoto Univ.}, 18(3):453--508, 1978.

\bibitem{vsJones0}
H.~Yagisita.
\newblock Nearly spherically symmetric expanding fronts in a bistable
  reaction-diffusion equation.
\newblock {\em J. Dynam. Differential Equations}, 13(2):323--353, 2001.

\bibitem{Zlatos-width}
A.~Zlato{\v{s}}.
\newblock Propagation of reactions in inhomogeneous media.
\newblock \emph{Comm. Pure Appl. Math., to appear}.



\end{thebibliography}
\end{document}